\newtheorem{theorem}{Theorem}
\newtheorem{conjecture}[theorem]{Conjecture}
\newtheorem{proposition}[theorem]{Proposition}
\newtheorem{remark}[theorem]{Remark}
\numberwithin{theorem}{section}
\numberwithin{equation}{section}
\begin{document}


\makeatletter
\def\Ddots{\mathinner{\mkern1mu\raise\p@
\vbox{\kern7\p@\hbox{.}}\mkern2mu
\raise4\p@\hbox{.}\mkern2mu\raise7\p@\hbox{.}\mkern1mu}}
\makeatother

\newcommand{\OP}[1]{\operatorname{#1}}
\newcommand{\GO}{\OP{GO}}
\newcommand{\leftexp}[2]{{\vphantom{#2}}^{#1}{#2}}
\newcommand{\leftsub}[2]{{\vphantom{#2}}_{#1}{#2}}
\newcommand{\rightexp}[2]{{{#1}}^{#2}}
\newcommand{\rightsub}[2]{{{#1}}_{#2}}
\newcommand{\AI}{\OP{AI}}
\newcommand{\gen}{\OP{gen}}
\newcommand{\prim}{\OP{\star prim}}
\newcommand{\Image}{\OP{Im}}
\newcommand{\Spec}{\OP{Spec}}
\newcommand{\Ad}{\OP{Ad}}
\newcommand{\tr}{\OP{tr}}
\newcommand{\spec}{\OP{spec}}
\newcommand{\scopy}{\OP{end}}
\newcommand{\ord}{\OP{ord}}
\newcommand{\Cent}{\OP{Cent}}
\newcommand{\wellip}{\OP{w-ell}}
\newcommand{\Nrd}{\OP{Nrd}}
\newcommand{\error}{\OP{RelErr}}
\newcommand{\LCM}{\OP{LCM}}
\newcommand{\sgn}{\OP{sgn}}
\newcommand{\SU}{\OP{SU}}
\newcommand{\Hom}{\OP{Hom}}
\newcommand{\Inter}{\OP{Int}}
\newcommand{\diag}{\OP{diag}}
\newcommand{\Sym}{\OP{Sym}}
\newcommand{\GSp}{\OP{GSp}}
\newcommand{\GL}{\OP{GL}}
\newcommand{\GSO}{\OP{GSO}}
\newcommand{\height}{\OP{ht}}
\newcommand{\erf}{\OP{erf}}
\newcommand{\vol}{\OP{vol}}
\newcommand{\cusp}{\OP{cusp,\tau}}
\newcommand{\un}{\OP{un}}
\newcommand{\disci}{\OP{disc,\tau_{\it i}}}
\newcommand{\cuspi}{\OP{cusp,\tau_{\it i}}}
\newcommand{\ellip}{\OP{ell}}
\newcommand{\sph}{\OP{sph}}
\newcommand{\gsimp}{\OP{sim-gen}}
\newcommand{\Aut}{\OP{Aut}}
\newcommand{\disc}{\OP{disc,\tau}}
\newcommand{\sdisc}{\OP{s-disc}}
\newcommand{\aut}{\OP{aut}}
\newcommand{\End}{\OP{End}}
\newcommand{\barQ}{\OP{\overline{\mathbf{Q}}}}
\newcommand{\barQp}{\OP{\overline{\mathbf{Q}}_{\it p}}}
\newcommand{\Gal}{\OP{Gal}}
\newcommand{\PGL}{\OP{PGL}}
\newcommand{\simp}{\OP{sim}}
\newcommand{\pri}{\OP{prim}}
\newcommand{\Normal}{\OP{Norm}}
\newcommand{\Ind}{\OP{Ind}}
\newcommand{\St}{\OP{St}}
\newcommand{\unit}{\OP{unit}}
\newcommand{\reg}{\OP{reg}}
\newcommand{\SL}{\OP{SL}}
\newcommand{\Frob}{\OP{Frob}}
\newcommand{\Id}{\OP{Id}}
\newcommand{\GSpin}{\OP{GSpin}}
\newcommand{\Norm}{\OP{Norm}}

\keywords{Elliptic Curves, Sato-Tate Distributions, Pseudorandomness, Discrepancy}
\subjclass[2010]{11G05, 11Y70}

\title[Pseudorandomness of Sato-Tate Distributions]{Pseudorandomness of Sato-Tate Distributions for Elliptic Curves}
\author{Chung Pang Mok, Huimin Zheng}

\address{School of Mathematical Sciences, Soochow University, 1 Shi-Zi Street, Suzhou 215006, Jiangsu Province, China}

\email{zpmo@suda.edu.cn}

\address{Department of Mathematics, Nanjing University, 22 Hankou Road, Nanjing 210093, Jiangsu Province, China}

\address{Jiangsu National Center for Applied Mathematics}
\email{zhhm@smail.nju.edu.cn}

\maketitle

\begin{abstract}
In this paper we propose conjectures that assert that, the sequence of Frobenius angles of a given elliptic curve over $\mathbf{Q}$ without complex multiplication is pseudorandom, in other words that the Frobenius angles are {\it statistically independently} distributed with respect to the Sato-Tate measure. Numerical evidences are presented to support the conjectures. 
\end{abstract}

\tableofcontents

\section{Introduction and Statement of Conjectures}

Let $E$ be an elliptic curve defined over the field $\mathbf{Q}$ of rational numbers. Assume that $E$ does not have complex multiplication. Denote by $N$ the conductor of $E$, so that $E$ has good reduction at primes $p$ not dividing $N$. Let $\{p_k\}_{k \geq 1}$ be the set of primes in ascending order. For $p_k$ not dividing $N$, define as usual the quantity $a_{p_k}:= p_k+1-\#E(\mathbf{F}_{p_k})$ (here $\mathbf{F}_{p_k}$ is the finite field of cardinality $p_k$, and $E(\mathbf{F}_{p_k})$ is the group of points of $E$ over $\mathbf{F}_{p_k}$). By the Hasse bound one has $|a_{p_k}| \leq 2 p_k^{1/2}$. We define $x_k \in [0,1]$ for $k \geq 1$, referred to as the (normalized) Frobenius angle of $E$ at the prime $p_k$, by the condition:
\[
a_{p_k} = 2p_k^{1/2} \cos(\pi x_k)
\]
if $p_k$ does not divide $N$, and we simply define $x_k=1/2$ if $p_k$ divides $N$. The Sato-Tate conjecture, as established by Taylor {\it et. al.} \cite{CHT,T,HSBT,BLGHT}, states that the sequence $\{x_k\}_{k \geq 1}$ is uniformly distributed with respect to the Sato-Tate measure on $[0,1]$. Specifically define the Sato-Tate measure $\mu_{ST}$ on $[0,1]$ (which is a probability measure) by:
\[
d \mu_{ST} = 2 \sin^2(\pi u) \, du
\] 
(where $du$ is the Lebesgue measure on $[0,1]$) and for $x_k$ as above, denote by $\delta_{x_k}$ the Dirac point mass distribution on $[0,1]$ supported at $x_k$. For any integer $K \geq 1$ consider the probability distribution on $[0,1]$ given by:
\begin{eqnarray}
\frac{1}{K} \sum_{k=1}^K \delta_{x_k}
\end{eqnarray}
then the Sato-Tate conjecture states that the probability distribution (1.1) converges weakly to the Sato-Tate measure $\mu_{ST}$ as $K$ tends to infinity.

\bigskip
In this paper we propose the following refinement of the original Sato-Tate conjecture, that in addition to uniform distribution, we conjecture that the sequence $\{x_k\}_{k \geq 1}$ is pseudorandom, in other words the $x_k$'s are in fact {\it statistically independently} distributed with respect to the Sato-Tate measure. Specifically for any integer $s \geq 1$, denote by $\mu_{ST}^{[s]}$ the probability measure on $[0,1]^s$ given by the product of $s$ (independent) copies of the original one dimensional Sato-Tate measure $\mu_{ST}$. We consider the joint distributions for $s$ successive terms from the sequence $\{x_k\}_{k \geq 1}$. Thus define $X_k \in [0,1]^s$ for $k \geq 1$ to be the $s$-dimensional vector given by:
\[
X_k = (x_k,x_{k+1}, \cdots,x_{k+s-1})
\]
and denote by $\delta_{X_k}$ the Dirac point mass distribution on $[0,1]^s$ that is supported at $X_k$.

\bigskip
For integer $K \geq 1$ consider similarly the probability distribution on $[0,1]^s$ given by:
\begin{eqnarray}
\frac{1}{K} \sum_{k=1}^K \delta_{X_k}
\end{eqnarray}

\bigskip
We propose the following:
\begin{conjecture}
For any integer $s \geq 1$, the sequence $\{X_k\}_{k \geq 1}$ is uniformly distributed with respect to $\mu_{ST}^{[s]}$. In other words the probability distribution on $[0,1]^s$ as given by (1.2), converges weakly to $\mu_{ST}^{[s]}$, as $K$ tends to infinity. 
\end{conjecture}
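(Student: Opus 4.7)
The plan is to apply Weyl's equidistribution criterion. By Stone--Weierstrass, it suffices to exhibit, for each multi-index $(m_1,\ldots,m_s) \in \mathbb{Z}_{\geq 0}^s$ different from $(0,\ldots,0)$, the asymptotic
\[
\frac{1}{K}\sum_{k=1}^K \prod_{i=1}^s U_{m_i}\bigl(\cos(\pi x_{k+i-1})\bigr) \;\longrightarrow\; 0 \qquad \text{as } K \to \infty,
\]
where $U_m$ is the Chebyshev polynomial of the second kind, so that $U_m(\cos\theta) = \sin((m+1)\theta)/\sin\theta$ is the character at $\mathrm{diag}(e^{i\theta},e^{-i\theta})$ of the $(m+1)$-dimensional irreducible representation of $\SU(2)$. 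Since these characters form an orthonormal basis of $L^2([0,1],\mu_{ST})$, the tensor products $\prod_i U_{m_i}$ form an orthonormal basis of $L^2([0,1]^s,\mu_{ST}^{[s]})$, and vanishing of the displayed averages (for all non-constant test functions) is exactly the statement of the conjecture.

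When $s=1$ the required asymptotic is the original Sato--Tate theorem: potential automorphy of $\Sym^{m} E$ together with holomorphy and non-vanishing of $L(s,\Sym^m E)$ on $\mathrm{Re}(s)=1$ yields $\sum_{p_k \leq x} U_m(\cos(\pi x_k)) = o(\pi(x))$ by a standard Tauberian argument. For general $s$ the natural route is to decouple the indexing by rank of the prime and reduce to mixed correlations
\[
\sum_{\substack{p,q \leq x \\ p<q \leq p+h}} U_{m_1}(\cos(\pi x_p))\,U_{m_2}(\cos(\pi x_q))
\]
(and their higher analogues for $s\geq 3$) weighted by the indicator that $q$ is the prime immediately following $p$. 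One could then hope to attack such sums by a Rankin--Selberg style analysis of a shifted convolution associated to $\Sym^{m_1} E$ and $\Sym^{m_2} E$, with the admissible shift $h$ calibrated against the expected size of prime gaps; an inductive/iterative version of this argument would cover $s \geq 3$.

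The main obstacle, and the reason this is advertised as a conjecture rather than a theorem, is precisely that the sequence is indexed by the rank $k$ of the prime rather than by a fixed additive or multiplicative translate: the independence assertion becomes a correlation problem spanning consecutive primes, and no analytic tool currently handles such correlations unconditionally. Already the simplest non-trivial instance $s=2$, $m_1=m_2=1$ is an automorphic analogue of Chowla-type conjectures such as $\sum_{n\leq x}\lambda(n)\lambda(n+1) = o(x)$ for the Liouville function, whose unconditional resolution is well beyond current techniques. I therefore expect any eventual proof to require simultaneously a new Langlands-functoriality input for external tensor products of symmetric powers (to produce the relevant automorphic $L$-functions on $\GL_{m_1+1} \times \cdots \times \GL_{m_s+1}$), sharp analytic control of those $L$-functions, and a sufficiently strong unconditional theory of prime gaps. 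Absent such breakthroughs, the numerical evidence pursued in the remainder of the paper appears to be the most that can presently be offered in support of the conjecture.
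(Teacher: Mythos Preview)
The statement is a \emph{conjecture}, and the paper does not prove it: Section~2 offers only numerical evidence (tabulated averages $\frac{1}{K}\sum_k f(X_k)$ against various test functions), and the authors explicitly remark that they do not know whether the case $s\geq 2$ can be approached via $L$-functions. So there is no ``paper's own proof'' to compare against.

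Your proposal is likewise not a proof, and you say so yourself. The reformulation via Weyl's criterion with Chebyshev polynomials $U_m$ is correct and is the standard way to recast the problem; your identification of the $s\geq 2$ case as a correlation problem across \emph{consecutive} primes (hence a Chowla-type obstruction) is apt. But none of the ingredients you list---shifted convolutions for symmetric powers, Rankin--Selberg for external tensor products, control of prime gaps---are available in the strength needed, so the argument remains a heuristic outline rather than a proof. In that sense your write-up and the paper are in agreement: the statement is open, and numerical evidence is all that is presently on offer.

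One small correction: the reduction you sketch for $s=2$ would require handling sums over pairs $(p,q)$ with $q$ the \emph{immediate} successor of $p$, not merely $p<q\leq p+h$ for a fixed gap $h$. The indicator of ``$q$ is the next prime after $p$'' is itself an arithmetic object of Chowla-type difficulty, so the decoupling step you describe is not a genuine reduction to a more tractable problem.
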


\bigskip
For $s=1$ this is the original Sato-Tate conjecture (which is already proved). For $s \geq 2$ this asserts the statistical independence of the distribution of $s$ successive terms from the sequence $\{x_k\}_{k \geq 1}$ with respect to Sato-Tate measure. Thus in terms of statistical distribution, Conjecture 1.1 asserts that, with respect to the Sato-Tate measure, the sequence $\{x_k\}_{k \geq 1}$ is $\infty$-distributed in the sense of Knuth, {\it c.f.} Definition C on page 151 of \cite{K}. It is in this sense that we say that the sequence $\{x_k\}_{k \geq 1}$ is pseudorandom (with respect to the Sato-Tate measure). We present numerical evidences for Conjecture 1.1 in section 2 below.

\bigskip
\begin{remark}
\end{remark}
\noindent Let $E$ and $E^{\prime}$ be elliptic curves over $\mathbf{Q}$ without complex multiplication, and assume that $E$ and $E^{\prime}$ are non-isogenous. Let $\{x_k\}_{k \geq 1}$ and $\{x_k^{\prime} \}_{k \geq 1}$ be the sequences of Frobenius angles associated to $E$ and $E^{\prime}$ respectively. Consider the two dimensional vectors $(x_k,x_k^{\prime}) \in [0,1]^2 $ for $k \geq 1$. Harris \cite{H} established that the sequence $\{ (x_k,x_k^{\prime})\}_{k \geq 1}$ is uniformly distributed with respect to $\mu_{ST}^{[2]}$. By contrast, the setting of our Conjecture 1.1 concerns the statistical independence of the distribution of Frobenius angles for a single elliptic curve.
\bigskip
\bigskip

Even more optimistically, we propose the following quantitive refinement of Conjecture 1.1. Firstly define the extreme discrepancy $D^{[s]}_K$ (with respect to the measure $\mu_{ST}^{[s]}$) for integer $K \geq 1$ as follows. For any rectangular region $\mathcal{R} \subset [0,1]^s$ of the form:
\[
\mathcal{R} = [a_1,b_1) \times \cdots \times [a_s,b_s)
\]
define
\[
A(\mathcal{R};K) = \#\{ 1\leq k\leq K \,\ | \,\ X_k \in \mathcal{R}       \}
\]
\bigskip
Then define
\[
D^{[s]}_K = \sup_{\mathcal{R} } \Big|    \frac{A(\mathcal{R};K)}{K}        -  \mu_{ST}^{[s]}  (\mathcal{R})   \Big|
\]
where $\mathcal{R}$ ranges over all rectangular regions in $[0,1]^s$ as above.

\bigskip

In general for a point $W =(w^{(1)},\cdots,w^{(s)}) \in [0,1]^s$, we denote by $\mathcal{R}_W \subset [0,1]^s$ the rectangular region:
\[
\mathcal{R}_W = [0,w^{(1)} ) \times \cdots \times  [0,w^{(s)} ) 
\]

\bigskip

We define the star discrepancy $D_K^{*,[s]}$ (with respect to the measure $\mu_{ST}^{[s]}$) as:
\[
D^{*,[s]}_K = \sup_{W \in [0,1]^s } \Big|    \frac{A(\mathcal{R}_W;K )}{K}        -  \mu_{ST}^{[s]}  (\mathcal{R}_W)   \Big|
\]

\bigskip
We have $0 \leq D_K^{[s]} ,D_K^{*,[s]}\leq 1$, and the inequalities ({\it c.f.} p. 93 of \cite{KN}):
\begin{eqnarray}
D_K^{*,[s]} \leq D_K^{[s]} \leq 2^s \cdot D_K^{*,[s]}
\end{eqnarray}
(remark that in {\it loc. cit.} the notion of discrepancy with respect to the Lebesgue measure on $[0,1]^s$ is considered, but the same considerations apply verbatim with respect to the measure $\mu_{ST}^{[s]}$ as well).

\bigskip
The discrepancies $D^{[s]}_K $ and $D_K^{*,[s]}$ quantify the uniformity of distribution of the finite set $\{ X_k \}_{k=1}^K$ with respect to the measure $\mu_{ST}^{[s]}$ on $[0,1]^s$.

\bigskip
\begin{conjecture}
For any integer $s \geq 1$ and $\epsilon >0$, there exists a constant $C=C(E,s,\epsilon)$ (depending only on the elliptic curve $E$, $s$ and $\epsilon$), such that:
\[
D^{[s]}_K \leq C K^{\epsilon- \frac{1}{2}}
\]
for any integer $K \geq 1$.
\end{conjecture}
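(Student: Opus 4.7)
The plan is to reduce Conjecture 1.2 to square-root cancellation in Weyl-type sums formed from the characters of $\Sym^m \SU(2)$, and to attempt that cancellation via automorphic input. Since $\mu_{ST}^{[s]}$ is the pushforward of Haar measure on $s$ copies of the space of conjugacy classes in $\SU(2)$, an orthonormal basis for $L^2([0,1]^s,\mu_{ST}^{[s]})$ is given by the tensor products $\chi_{m_1}\otimes\cdots\otimes\chi_{m_s}$, where $\chi_m$ denotes the character of $\Sym^m\SU(2)$ restricted to the standard maximal torus. A Sato--Tate-weighted variant of the Erd\H{o}s--Tur\'an--Koksma inequality (applied to the star discrepancy $D_K^{*,[s]}$, then combined with (1.3)) reduces the conjecture to showing that, for every nonzero $\mathbf{m}=(m_1,\dots,m_s)$ with $\max_i m_i\leq M$,
\[
S(\mathbf{m};K)\ :=\ \sum_{k=1}^{K}\prod_{i=1}^{s}\chi_{m_i}(x_{k+i-1})\ =\ O_{s,M}(K^{1/2+\epsilon}),
\]
followed by an optimization of the cutoff $M=M(K)$.

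The first sub-task is to dispose of the \emph{single-prime} Weyl sums, namely those $\mathbf{m}$ with exactly one nonzero entry $m_j=m$. Re-indexing $k\leftrightarrow p_k$ via the prime number theorem, such an $S(\mathbf{m};K)$ matches, up to admissible error, a partial sum $\sum_{p\leq X}\chi_m(x_p)$; this is in turn the partial sum of the prime-indexed Dirichlet coefficients of the symmetric power $L$-function $L(s,\Sym^m E)$, whose modularity and functional equation are now unconditional by Newton--Thorne. Conditionally on GRH for $L(s,\Sym^m E)$, the standard contour shift via the explicit formula yields $S(\mathbf{m};K)=O_{m,\epsilon}(K^{1/2+\epsilon})$, which already settles the case $s=1$ and disposes of the degenerate contributions for all $s$.

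The substantive obstacle appears for $s\geq 2$ when at least two entries of $\mathbf{m}$ are nonzero: the summand then couples Frobenius values at $s$ distinct, consecutive primes $p_k,\dots,p_{k+s-1}$, and no standard $L$-function has Dirichlet coefficients encoding such a product as $p_k$ varies. A natural reformulation rewrites the sum as $\sum_{p\leq X}\sum_{\mathbf{d}\in\mathcal{D}(p)}\prod_i\chi_{m_i}(x_{p+d_{i-1}})$, where $\mathcal{D}(p)$ records the gap pattern $0=d_0<d_1<\cdots<d_{s-1}$ prescribing the next $s-1$ primes after $p$; a sharp estimate then simultaneously demands a Hardy--Littlewood-type equidistribution of prime $s$-tuples and a square-root-cancelling joint Sato--Tate law across the resulting $s$-tuple.

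The hard part is precisely this last step. In the absence of an automorphic object whose Dirichlet coefficients are products of symmetric-power Hecke eigenvalues at distinct, varying primes, overcoming it appears to require either a new \emph{multi-prime} GRH-type hypothesis for an enlarged class of Dirichlet series attached to the sequence of Frobenius conjugacy classes, or a sieve-theoretic argument extracting the required cancellation directly from the combined prime-tuple and Sato--Tate distributions. Once the bound $S(\mathbf{m};K)\ll_{s,M,\epsilon}K^{1/2+\epsilon}$ is available for all nonzero $\mathbf{m}$ with entries at most $M$, the Erd\H{o}s--Tur\'an--Koksma inequality together with (1.3), optimized in $M$, immediately yields the desired discrepancy estimate $D_K^{[s]}\ll_{E,s,\epsilon}K^{\epsilon-1/2}$.
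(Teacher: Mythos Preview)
The statement you are attempting to prove is a \emph{conjecture}, and the paper does not prove it; it only presents numerical evidence (Sections~3 and~4). There is therefore no ``paper's own proof'' to compare against. More importantly, what you have written is not a proof either: it is a sketch of a strategy together with an honest inventory of the obstacles, and the obstacles are genuine.

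Two specific gaps deserve emphasis. First, even in the case $s=1$ your claim that GRH for the symmetric power $L$-functions ``already settles'' the conjecture is too optimistic. The Erd\H{o}s--Tur\'an--Koksma inequality introduces a loss coming from the dependence of the implied constant in $S(\mathbf{m};K)=O_{m,\epsilon}(K^{1/2+\epsilon})$ on $m$ (essentially the analytic conductor of $L(s,\Sym^m E)$), and optimizing the cutoff $M$ does not recover the full exponent $\tfrac12$. The paper itself records this: Nagoshi's theorem, assuming GRH for all $\Sym^m$, yields the Akiyama--Tanigawa bound only for $\epsilon>\tfrac14$, not for all $\epsilon>0$. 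So your single-prime step does not close even the $s=1$ case. Second, for $s\geq 2$ you correctly identify that the relevant Weyl sums couple Frobenius at \emph{consecutive} primes $p_k,\dots,p_{k+s-1}$, and you then say that handling this ``appears to require either a new multi-prime GRH-type hypothesis\dots or a sieve-theoretic argument.'' That is precisely the missing idea: no such Dirichlet series, hypothesis, or sieve argument is currently available, and your proposal supplies none. This matches the paper's own remark that it is not known whether Conjecture~1.3 for $s\geq 2$ can be approached via $L$-functions at all.

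In short, your outline is a reasonable heuristic for why the conjecture is plausible and how a proof might eventually be organized, but it is not a proof, and the paper makes no claim to have one.
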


\bigskip
Of course, Conjecture 1.3 is interesting only when $ \epsilon < \frac{1}{2} $; in addition it can also be stated equivalently in terms of the star discrepancy $D_K^{*,[s]}$ instead of $D_K^{[s]}$, by virtue of the inequalities (1.3).

\bigskip

When $s=1$, Conjecture 1.3 was originally formulated by Akiyama-Tanigawa (Conjecture 1 of \cite{AT}), which refines the original Sato-Tate conjecture. In general Conjecture 1.3 is a refinement of Conjecture 1.1; namely that by standard results on uniform distribution ({\it c.f.} p. 93 of \cite{KN}), Conjecture 1.1 is equivalent to the assertion:
\[
\lim_{K \rightarrow \infty} D^{[s]}_K =0.
\]

\bigskip

 Conjecture 1.3 is a very strong statement. Indeed Akiyama-Tanigawa showed that their conjecture (i.e. Conjecture 1.3 in the case $s=1$) implies the validity of the Riemann Hypothesis for the $L$-function associated to $E$ (that the $L$-function associated to $E$ has analytic continuation is of course the consequence of the modularity of $E$); more generally their conjecture implies the validity of the Riemann Hypothesis for all the higher symmetric power $L$-functions associated to $E$, {\it c.f.} Poposition 3.5 of \cite{M} for the precise statement (in {\it loc. cit.} suitable analytic hypotheses on the higher symmetric power $L$-functions are assumed, which are in any case consequences of the Langlands Functoriality Conjecture with respect to the symmetric power functorial liftings of the modular form associated to $E$. The existence of the symmetric power functorial liftings of the modular form associated to $E$ is established by Newton-Thorne \cite{NT} under quite general conditions, including all semistable $E$ for instance). 
 
 \bigskip
 Conversely Nagoshi showed (see Theorem 2 of \cite{N}) that the conjecture of Akiyama-Tanigawa holds (at least) for $ \epsilon > 1/4$, if one supposes the validity of the Riemann Hypothesis for all the higher symmetric power $L$-functions associated to $E$ (again assuming suitable analytic hypotheses on the higher symmetric power $L$-functions); see also \cite{RT} for the explicit version. At this moment we do not know whether our Conjecture 1.3 for $s \geq 2$ can be approached using the theory of $L$-functions. Nevertheless we present numerical evidences for Conjecture 1.3 in section 3 below.

\bigskip

In this paper the computations of the orders of the group of points of elliptic curves over finite fields were performed using the {\it GP/PARI} program. The rest of the computations were then performed using {\it Mathematica 9.0}.

\section{Numerical Evidences for Conjecture 1.1}

With the setting as in Conjecture 1.1, for continuous function $f$ defined on $[0,1]^s$, we would like to test whether:
\[
 \frac{1}{K} \sum_{k=1}^K f(X_k)  \stackrel{?}{\rightarrow} \int_{[0,1]^s} f \, d \mu_{ST}^{[s]}
\]
 as $K$ tends to infinity. 

\bigskip
We consider the following six elliptic curves over $\mathbf{Q}$ without complex multiplication, taken from the {\it $L$-functions and modular forms database}, whose affine Weierstrass equations are given as follows, with conductor $N$ and Mordell-Weil rank $r$ as indicated:

\[
E_1: y^2+y=x^3-x^2, \,\ N=11,\,\ r=0
\]

\[
E_2: y^2+y =x^3-x, \,\ N=37, \,\ r=1
\]

\[
E_3: y^2 +xy=x^3+1, \,\ N=433, \,\ r=2
\]

\[
E_4:y^2+y=x^3-7x+6, \,\ N=5077, \,\ r=3
\]

\[
E_5: y^2+y=x^3-7x+36, \,\ N=545723, \,\ r=4
\]

\[
E_6: y^2+y=x^3-79x+342, \,\ N=19047851, \,\ r=5
\]

\bigskip

\bigskip
We denote the coordinates on $[0,1]^s$ as $u^{(1)},\cdots,u^{(s)}$. For testing statistical independence, it is good enough to choose test functions $f$ of the form:
\[
f(u^{(1)},\cdots,u^{(s)}) = \prod_{i=1}^s f_i(u^{(i)})
\]
for continuous functions $f_i$ on $[0,1]$, in which case we have
\begin{eqnarray}
\int_{[0,1]^s} f \, d\mu_{ST}^{[s]} = \prod_{i=1}^s \int_{[0,1]}  f_i (u^{(i)})  \cdot 2 \sin^2(\pi u^{(i)}) \, d u^{(i)}
\end{eqnarray}

\bigskip
We first consider the case $s=10$. Define the function $f^{[10]}$ on $[0,1]^{10}$:

\begin{eqnarray*}
& & f^{[10]}(u^{(1)},\cdots,u^{(10)}) \\ 
&= & \ln(2 + u^{(1)}) \cdot \ln (3 + u^{(2)}) \cdot \exp(-u^{(3)}) \cdot (1+ u^{(4)})^2 \cdot   (2+ u^{(5)})  \cdot \\
& &  \sqrt{2 + u^{(6)}} \cdot \sqrt{ 3 + u^{(7)}} \cdot (4+ u^{(8)})^{\frac{1}{3}} \cdot  (8+ u^{(9)})^{\frac{1}{4}}  \cdot  \exp( \sqrt{1+u^{(10)}}   )
\end{eqnarray*}

\bigskip
Using the command NIntegrate of {\it Mathematica}, the numerical value of the integral $\int_{[0,1]^{10}}  f^{[10]} \, d\mu_{ST}^{[10]}$ is computed as in (2.1) to be:
\[ 
\int_{[0,1]^{10}}  f^{[10]} \, d\mu_{ST}^{[10]} \stackrel{.}{=}114.076
\]

The numerical results for $\frac{1}{K} \sum_{k=1}^K f^{[10]}(X_k)$ for $K=5000$, $K=10000$, $K=20000$, $K=50000$, and $K=100000$ are tabulated in Figure 1 below.

\begin{figure}[hp] 
	
	\centering  
	
	\begin{tabular}{c|c|c|c|c|c}

		\toprule 
		& $K=5000$  &  $   K=10000$   &  $   K=20000$   &   $K=50000 $ & $K=100000$   \\
		
		\midrule 
        $E_1$    &    113.87      &  113.753        &    113.903      &  114.009     &  114.032\\    
        $E_2$   & 114.196       &  114.08        &  114.074            &   114.128     & 114.181\\

        $E_3$ &    114.534     &  114.576        &       114.493          &     114.154  & 114.237 \\

        $E_4$ &   115.375      &     115.011  &     114.683        &   114.441      &  114.248\\

        $E_5$ & 116.127         &    115.137   &      114.499       &    114.62        & 114.474 \\
        $E_6$ &  116.559      & 115.371      &         115.312     &    114.471       &114.519\\
       
		\bottomrule 
		
	\end{tabular}
    \caption{Numerical results for $\frac{1}{K} \sum_{k=1}^K f^{[10]}(X_k)$}  
    \label{Numres}
\end{figure}

\bigskip
We next consider examples with larger values of $s$. For $s \geq 1$ define the function $g^{[s]}$ on $[0,1]^s$ given by:
\begin{eqnarray*}
 g^{[s]}(u^{(1)},\cdots,u^{(s)}) =  100 \cdot \prod_{i=1}^s \exp ( - u^{(i)} /i)
\end{eqnarray*}

\bigskip
We have:
\begin{eqnarray}
& & \int_{[0,1]^s} g^{[s]} \, d \mu_{ST}^{[s]}  \\ &=& 100 \cdot \prod_{i=1}^s \int_{[0,1]} \exp (-u^{(i)}/i) \cdot 2 \sin^2(\pi u^{(i)})\, d u^{(i)}  \nonumber \\
  &=& 100 \cdot  \prod_{i=1}^s    \left( \big(   1 - \exp (-1/i)  \big)
         \cdot \frac{4 \pi^2 i^3}{1+ 4\pi^2 i^2} \right) \nonumber
\end{eqnarray}

\bigskip
The numerical values of $\int_{[0,1]^s} g^{[s]} \, d \mu_{ST}^{[s]}$ for $s=500$, $s=1000$, and $s=2000$, are computed as in (2.2) to be:

\begin{eqnarray*}
\int_{[0,1]^{500}} g^{[500]} \, d \mu_{ST}^{[500]} \stackrel{.}{=}
3.44034
\end{eqnarray*}
\begin{eqnarray*}
\int_{[0,1]^{1000}} g^{[1000]} \, d \mu_{ST}^{[1000]} \stackrel{.}{=}
2.43333
\end{eqnarray*}
\begin{eqnarray*}
\int_{[0,1]^{2000}} g^{[2000]} \, d \mu_{ST}^{[2000]} \stackrel{.}{=}
1.72086
\end{eqnarray*}

\bigskip
While the numerical results for $\frac{1}{K} \sum_{k=1}^K g^{[s]}(X_k)$ for $K=5000$, $K=10000$, $K=20000$, $K=50000$, and $K=100000$, in the cases $s=500$, $s=1000$, and $s=2000$ respectively, are tabulated in Figures 2, 3, 4 below.

\bigskip

\begin{figure}[hp] 
	
	\centering  
	
	\begin{tabular}{c|c|c|c|c|c} 
		
		\toprule 
		& $K=5000$  &  $   K=10000$   &  $   K=20000$   &   $K=50000 $ & $K=100000$   \\
		
		\midrule 
$E_1$&      3.4513 & 3.4541 & 3.45074 & 3.4417 & 3.44034 \\
 $E_2$&3.4208 & 3.43017 & 3.43423 & 3.43228 & 3.42744 \\
 $E_3$&3.4013 & 3.38951 & 3.40058 & 3.43055 & 3.42734 \\
 $E_4$&3.33751 & 3.36335 & 3.38431 & 3.40763 & 3.42225 \\
 $E_5$&3.27776 & 3.35011 & 3.40524 & 3.3932 & 3.40618 \\
 $E_6$&3.24535 & 3.33898 & 3.33186 & 3.4024 & 3.39838 \\
		\bottomrule 
		
	\end{tabular}
    \caption{Numerical results for $\frac{1}{K} \sum_{k=1}^K g^{[500]}(X_k)$}  
    \label{Numres}
\end{figure}

\bigskip

\begin{figure}[hp] 
	
	\centering  
	
	\begin{tabular}{c|c|c|c|c|c} 
		
		\toprule 
		& $K=5000$  &  $   K=10000$   &  $   K=20000$   &   $K=50000 $ & $K=100000$   \\
		
		\midrule 
      
 $E_1$&2.4422 & 2.44414 & 2.44149 & 2.43447 & 2.43337 \\
 $E_2$&2.41925 & 2.42568 & 2.42922 & 2.42732 & 2.42337 \\
 $E_3$&2.40409 & 2.39574 & 2.40298 & 2.426 & 2.42349 \\
 $E_4$&2.35616 & 2.37465 & 2.39071 & 2.40813 & 2.4195 \\
 $E_5$&2.31073 & 2.3628 & 2.40653 & 2.39708 & 2.40692 \\
 $E_6$&2.28672 & 2.35609 & 2.3498 & 2.40406 & 2.40116 \\
       
		\bottomrule 
		
	\end{tabular}
    \caption{Numerical results for $\frac{1}{K} \sum_{k=1}^K g^{[1000]}(X_k)$}  
    \label{Numres}
\end{figure}

\bigskip

\begin{figure}[hp] 
	
	\centering  
	
	\begin{tabular}{c|c|c|c|c|c} 
		
		\toprule 
		& $K=5000$  &  $   K=10000$   &  $   K=20000$   &   $K=50000 $ & $K=100000$   \\
		
		\midrule 
 
 $E_1$&1.72737 & 1.72919 & 1.72717 & 1.72179 & 1.72091 \\
 $E_2$&1.71064 & 1.71501 & 1.71777 & 1.71642 & 1.71329 \\
 $E_3$&1.69752 & 1.6939 & 1.69775 & 1.71531 & 1.71333 \\
 $E_4$&1.66527 & 1.6769 & 1.68915 & 1.70186 & 1.71047 \\
 $E_5$&1.63056 & 1.66685 & 1.70142 & 1.6933 & 1.70075 \\
 $E_6$&1.61208 & 1.66286 & 1.65744 & 1.69837 & 1.69624 \\       
		\bottomrule 
		
	\end{tabular}
    \caption{Numerical results for $\frac{1}{K} \sum_{k=1}^K g^{[2000]}(X_k)$}  
    \label{Numres}
\end{figure}

\bigskip

In a similar way for $s \geq 1$ define the function $h^{[s]}$ on $[0,1]^s$ given by:
\begin{eqnarray*}
& & h^{[s]}(u^{(1)},\cdots,u^{(s)}) \\ & =& 100 \cdot  \prod_{i=1}^s \cos\left(\frac{ \pi u^{(i) }}{2 i^{1/2}} \right)
\end{eqnarray*}

\bigskip

We have:
\begin{eqnarray}
& & \int_{[0,1]^s} h^{[s]} \, d \mu_{ST}^{[s]}  \\ &=& 100 \cdot \prod_{i=1}^s \int_{[0,1]} \cos\left(\frac{\pi u^{(i)}}{2 i^{1/2}}\right) \cdot 2 \sin^2(\pi u^{(i)})\, d u^{(i)}  \nonumber \\
  &=& 100 \cdot \prod_{i=1}^s  \left(           \frac{2 i^{1/2}}{\pi} \cdot \sin\left( \frac{\pi}{2 i^{1/2}}\right)  \cdot \frac{16 i}{16i-1}  \right)  \nonumber
\end{eqnarray}

\bigskip
The numerical values of $\int_{[0,1]^s} h^{[s]} \, d \mu_{ST}^{[s]}$ for $s=500$, $s=1000$, $s=1500$, and $s=2000$, are computed as in (2.3) to be:

\begin{eqnarray*}
\int_{[0,1]^{500}} h^{[500]} \, d \mu_{ST}^{[500]}  \stackrel{.}{=}  8.814
\end{eqnarray*}
\begin{eqnarray*}
\int_{[0,1]^{1000}} h^{[1000]} \, d \mu_{ST}^{[1000]}  \stackrel{.}{=} 6.92239
\end{eqnarray*}
\begin{eqnarray*}
\int_{[0,1]^{1500}} h^{[1500]} \, d \mu_{ST}^{[1500]}   \stackrel{.}{=}6.0099   
\end{eqnarray*}
\begin{eqnarray*}
\int_{[0,1]^{2000}} h^{[2000]} \, d \mu_{ST}^{[2000]}   \stackrel{.}{=}    5.43635
\end{eqnarray*}

\bigskip

While the numerical results for $\frac{1}{K} \sum_{k=1}^K h^{[s]}(X_k)$ for $K=5000$, $K=10000$, $K=20000$, $K=50000$, and $K=100000$, in the cases $s=500$, $s=1000$, $s=1500$, and $s=2000$ respectively, are tabulated in Figures 5, 6, 7, 8 below.

\bigskip
\bigskip

Summing up: the numerical results of this section supply evidences for the validity of Conjecture 1.1; this conjecture can be described as saying that, the sequence $\{x_k\}_{k \geq 1}$ is a pseudorandom sequence in $[0,1]$ with distribution law given by the Sato-Tate measure. 

\bigskip

\begin{figure}[hp] 
	
	\centering  
	
	\begin{tabular}{c|c|c|c|c|c} 
		
		\toprule 
		& $K=5000$  &  $   K=10000$   &  $   K=20000$   &   $K=50000 $ & $K=100000$   \\
		
		\midrule 
       $E_1$&$8.84564$&$ 8.8739$&$ 8.85475$&$ 8.82749$& $8.81591$\\
 $E_2$&$8.74331$&$ 8.7891$&$ 8.80022$&$ 8.78814$&$ 8.7736$\\
 $E_3$&$8.70232$&$ 8.63423$&$ 8.67626$&$ 8.79667$&$ 8.77114$\\
 $E_4$&$8.49599$&$ 8.54784$&$ 8.62823$&$ 8.69612$&$ 8.76489$\\
 $E_5$&$8.27357$&$ 8.53379$&$ 8.69698$&$ 8.67127$&$ 8.70012$\\
 $E_6$&$8.18894$&$ 8.50111$&$ 8.45486$&$ 8.70109$&$ 8.69085$\\
       
		\bottomrule 
		
	\end{tabular}
    \caption{Numerical results for $\frac{1}{K} \sum_{k=1}^K h^{[500]}(X_k)$}  
    \label{Numres}
\end{figure}

\bigskip
\bigskip

\begin{figure}[hp] 
	
	\centering  
	
	\begin{tabular}{c|c|c|c|c|c} 
		
		\toprule 
		& $K=5000$  &  $   K=10000$   &  $   K=20000$   &   $K=50000 $ & $K=100000$   \\
		
		\midrule 
$E_1$&$6.9513$&$ 6.97341$&$ 6.95763$&$ 6.93425$&$ 6.9241$\\
 $E_2$&$6.86542$&$ 6.90182$&$ 6.91247$&$ 6.90096$&$ 6.8879$\\
 $E_3$&$6.82891$&$ 6.77459$&$ 6.80619$&$ 6.90821$&$ 6.8863$\\
 $E_4$&$6.65655$&$ 6.69763$&$ 6.76587$&$ 6.822$&$ 6.88$\\
 $E_5$&$6.47181$&$ 6.67989$&$ 6.82385$&$ 6.80146$&$ 6.8251$\\
 $E_6$&$6.40304$&$ 6.65958$&$ 6.61765$&$ 6.82616$&$ 6.8182$\\
       
		\bottomrule 
		
	\end{tabular}
    \caption{Numerical results for $\frac{1}{K} \sum_{k=1}^K h^{[1000]}(X_k)$}  
    \label{Numres}
\end{figure}

\bigskip
\bigskip

\begin{figure}[hp] 
	
	\centering  
	
	\begin{tabular}{c|c|c|c|c|c} 
		
		\toprule 
		& $K=5000$  &  $   K=10000$   &  $   K=20000$   &   $K=50000 $ & $K=100000$   \\
		
		\midrule 
         $E_1$&$ 6.03675      $&$   6.0559         $&$     6.04206        $&$    6.02086         $&$   6.01151        $\\
 $E_2$&$   5.96016         $&$    5.99199         $&$   6.00129          $&$    5.99081         $&$   5.97872        $\\
 $E_3$&$     5.9214       $&$       5.87986      $&$    5.90473         $&$     5.9971        $&$     5.97718      $\\
 $E_4$&$     5.77546       $&$     5.80653        $&$   5.86934          $&$   5.91933          $&$   5.97276        $\\
 $E_5$&$     5.60953       $&$     5.7892        $&$     5.92279        $&$    5.90058         $&$    5.92183       $\\
 $E_6$&$     5.54555       $&$   5.77334          $&$    5.73411         $&$   5.92231          $&$   5.91537        $\\
       
		\bottomrule 
		
	\end{tabular}
    \caption{Numerical results for $\frac{1}{K} \sum_{k=1}^K h^{[1500]}(X_k)$}  
    \label{Numres}
\end{figure}

\bigskip
\bigskip

\begin{figure}[hp] 
	
	\centering  
	
	\begin{tabular}{c|c|c|c|c|c} 
		
		\toprule 
		& $K=5000$  &  $   K=10000$   &  $   K=20000$   &   $K=50000 $ & $K=100000$   \\
		
		\midrule 
       $E_1$&$5.46061$&$ 5.47935$&$ 5.46676$&$ 5.4467$&$ 5.4379$\\
 $E_2$&$5.39035$&$ 5.41982$&$ 5.42836$&$ 5.41881$&$ 5.4073$\\
 $E_3$&$5.3513$&$ 5.31787$&$ 5.33875$&$ 5.42458$&$ 5.4058$\\
 $E_4$&$5.2241$&$ 5.24905$&$ 5.30693$&$ 5.35259$&$ 5.4022$\\
 $E_5$&$5.06843$&$ 5.23096$&$ 5.35741$&$ 5.33487$&$ 5.3545$\\
 $E_6$&$5.01111$&$ 5.21909$&$ 5.18121$&$ 5.35458$&$ 5.3483$\\
       
		\bottomrule 
		
	\end{tabular}
    \caption{Numerical results for $\frac{1}{K} \sum_{k=1}^K h^{[2000]}(X_k)$}  
    \label{Numres}
\end{figure}

\section{Numerical Evidences for Conjecture 1.3, part I}

For numerics related to Conjecture 1.3 in the case $s=1$ (i.e. the original conjecture of Akiyama-Tanigawa), we refer to \cite{AT} and \cite{St}. To test Conjecture 1.3 directly one would need to evaluate the values the $D_K^{[s]}$ or $D_K^{*,[s]}$ for $K$ large. But in higher dimensions $s$ there are serious combinatorial difficulties in computing (even just numerically) the values of $D_K^{[s]}$ or $D_K^{*,[s]}$; this is the well known phenomenon known as the {\it Curse of Dimensionality}. 

\bigskip

We first note that, Conjecture 1.3 is obviously equivalent to the statement:
\[
\liminf_{K \rightarrow \infty} - \frac{\ln  D^{[s]}_K}{\ln K} \geq \frac{1}{2}
\]
(and similarly with $D_K^{[s]}$ being replaced by $D_K^{*,[s]}$).

\bigskip
\begin{proposition}
Let $f$ be a function defined on $[0,1]^s$, which is of bounded variation in the sense of Hardy and Krause. Then Conjecture 1.3 implies:
\[
\liminf_{K \rightarrow \infty} - \frac{\ln \big|       \frac{1}{K}\sum_{k=1}^K f(X_k)   - \int_{[0,1]^s} f \, d\mu_{ST}^{[s]}  \big|}{\ln K} \geq \frac{1}{2}
\]
\end{proposition}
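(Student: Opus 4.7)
The natural tool here is the Koksma--Hlawka inequality, but in a form adapted to the non-uniform measure $\mu_{ST}^{[s]}$. The plan is to reduce to the classical Koksma--Hlawka inequality over Lebesgue measure via a coordinate-wise change of variables, and then combine the resulting bound with Conjecture 1.3.

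Concretely, let $F:[0,1]\to[0,1]$ denote the one-dimensional cumulative distribution function of $\mu_{ST}$, explicitly
\[
F(u) \,=\, \int_0^u 2\sin^2(\pi t)\,dt \,=\, u - \frac{\sin(2\pi u)}{2\pi},
\]
which is a strictly increasing continuous bijection of $[0,1]$ onto itself, and let $F^{[s]}:[0,1]^s\to[0,1]^s$ be its coordinate-wise extension. Set $Y_k := F^{[s]}(X_k)$ and $\tilde f := f\circ (F^{[s]})^{-1}$. Three facts then hold. First, by the change of variables formula (and the product structure of $\mu_{ST}^{[s]}$), $\frac{1}{K}\sum_{k=1}^K f(X_k)=\frac{1}{K}\sum_{k=1}^K \tilde f(Y_k)$ and $\int_{[0,1]^s} f\, d\mu_{ST}^{[s]}=\int_{[0,1]^s} \tilde f(u)\,du$. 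Second, since $F^{[s]}$ is a coordinate-wise monotone bijection sending each box $[0,w^{(1)})\times\cdots\times[0,w^{(s)})$ onto $[0,F(w^{(1)}))\times\cdots\times[0,F(w^{(s)}))$, the classical (Lebesgue) star discrepancy of the sequence $\{Y_k\}$ coincides with the Sato--Tate star discrepancy $D_K^{*,[s]}$ of $\{X_k\}$. Third, because $F$ is a strict monotone homeomorphism of $[0,1]$, the Hardy--Krause variation is preserved, so $V(\tilde f)<\infty$.

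Applying the classical Koksma--Hlawka inequality to $\tilde f$ and $\{Y_k\}$ then yields
\[
\Big| \frac{1}{K}\sum_{k=1}^K f(X_k) - \int_{[0,1]^s} f\, d\mu_{ST}^{[s]} \Big| \,\leq\, V(\tilde f) \cdot D_K^{*,[s]}.
\]
Taking $-\ln(\cdot)$ of both sides, dividing by $\ln K$, and passing to $\liminf$ as $K\to\infty$, the constant contribution $\ln V(\tilde f)/\ln K$ vanishes, while Conjecture 1.3 (re-expressed in terms of $D_K^{*,[s]}$ via the inequalities (1.3)) gives $\liminf_{K\to\infty} -\ln D_K^{*,[s]}/\ln K \geq 1/2$. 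Combining these yields the asserted bound.

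The main conceptual obstacle is verifying that the CDF transformation $F^{[s]}$ preserves the Hardy--Krause class of functions. This is essentially a definitional matter, since the Hardy--Krause variation is defined via suprema of sums of iterated differences over all partitions of $[0,1]^s$, and coordinate-wise monotone homeomorphisms induce bijections on such partitions; no endpoint smoothness of $F^{-1}$ is required. As an alternative route, one can bypass this step entirely by invoking a direct weighted version of Koksma--Hlawka for product measures (as developed in the quasi-Monte Carlo literature), which gives the same inequality without any change of variables.
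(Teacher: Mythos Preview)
Your proof is correct. Both you and the paper rely on the Koksma--Hlawka inequality to bound the integration error by a constant times $D_K^{*,[s]}$, after which the conclusion follows immediately from Conjecture 1.3 (restated for the star discrepancy via (1.3)), since the constant contributes a term of order $(\ln K)^{-1}$ that vanishes in the limit.

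The only difference lies in how the weighted (Sato--Tate) form of Koksma--Hlawka is obtained. The paper simply asserts that the standard Lebesgue-measure proof ``works verbatim'' for $\mu_{ST}^{[s]}$, writes down the resulting inequality $\big|\frac{1}{K}\sum_k f(X_k)-\int f\,d\mu_{ST}^{[s]}\big|\le V(f)\cdot D_K^{*,[s]}$, and stops there. You instead carry out the reduction explicitly: push the sequence forward via the coordinate-wise CDF $F^{[s]}$, observe that this identifies the $\mu_{ST}^{[s]}$-star discrepancy of $\{X_k\}$ with the Lebesgue star discrepancy of $\{Y_k\}$, check that Hardy--Krause variation is preserved under a monotone coordinate reparametrization, and then invoke the classical inequality. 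This CDF transform is in fact one of the standard ways to \emph{prove} the weighted Koksma--Hlawka inequality for product measures, so your argument is essentially the paper's one-line invocation with the details filled in; the alternative route you mention at the end (quoting a weighted Koksma--Hlawka directly) is exactly what the paper does. Your version is more self-contained and makes the preservation of $V(f)$ explicit; the paper's version is shorter but leaves that verification to the reader.
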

\begin{proof}
This is an immediate consequence of the Koksma-Hlawka inequality ({\it c.f.} p. 151 of \cite{KN} and p. 967 of \cite{Ni2}):
\[
\Big|       \frac{1}{K}\sum_{k=1}^K f(X_k)   - \int_{[0,1]^s} f \, d\mu_{ST}^{[s]}  \Big|   \leq V(f) \cdot D_K^{*,[s]}
\]
where $V(f)$ is the total variation of $f$ in the sense of Hardy and Krause (the version of the Koksma-Hlawka inequality stated in {\it loc. cit.} is with respect to the Lebesgue measure on $[0,1]^s$, but the same proof works verbatim with respect to the measure $\mu_{ST}^{[s]}$ on $[0,1]^s$).
\end{proof}

\bigskip
In view of Proposition 3.1. we may then test Conjecture 1.3 indirectly as follows. With $f$ defined on $[0,1]^s$ as above (of bounded variation in the sense of Hardy and Krause), denote the relative error:

\[
\error(f,K) =    \frac{\frac{1}{K}\sum_{k=1}^K f(X_k)   - \int_{[0,1]^s} f \, d\mu_{ST}^{[s]} }{ \int_{[0,1]^s} f \, d\mu_{ST}^{[s]} }
\]

\bigskip
\noindent (assuming that the integral is nonzero). 

\bigskip

Then we evaluate:

\[
-\frac{\ln |\error (f,K)|}{\ln K}
\]
with $K$ being large. By virtue of Proposition 3.1, Conjecture 1.3 implies that

\[
\liminf_{K \rightarrow \infty}   \frac{-\ln |\error(f,K) |}{\ln K} \geq \frac{1}{2}
\]

\bigskip
\bigskip 

In the following numerical examples the dimensions $s$ and the test functions $f$ on $[0,1]^s$ are chosen as in section 2, namely: 

\[
f^{[10]},g^{[500]},g^{[1000]},g^{[2000]},h^{[500]},h^{[1000]},h^{[1500]}, h^{[2000]}
\]

\bigskip
The results are tabulated in Figures 9 -16 below.

\bigskip
\bigskip

\begin{figure}[hp] 
	
	\centering  
	
	\begin{tabular}{c|c|c|c|c|c} 
		
		\toprule 
		& $K=5 \times 10^5$  &  $   K=   10^6$   &  $   K= 2 \times 10^6$   &   $K=5 \times 10^6 $ & $K=10^7$   \\
		
		\midrule 
       $E_1$    &     0.69949&0.679386&0.630927&0.66101&0.691336 \\    
        $E_2$   &     0.633969&0.581946&0.687151&0.641671&0.670621\\

        $E_3$ &     0.573905&0.57178&0.585748&0.576866&0.592804\\

        $E_4$ &      0.523103&0.542605&0.531523&0.566703&0.525969\\

        $E_5$ &  0.514324&0.516659&0.513771&0.50841&0.504189\\
        $E_6$ &   0.494667&0.524282&0.505647&0.490742&0.491219\\
       
		\bottomrule 
		
	\end{tabular}
    \caption{Numerical results for $-\frac{\ln |\error (f^{[10]},K)|}{\ln K}$}  
    \label{Numres}
\end{figure}

\bigskip
\bigskip

\begin{figure}[hp] 
	
	\centering  
	
	\begin{tabular}{c|c|c|c|c|c} 
		
		\toprule 
		& $K=5 \times 10^5$  &  $   K=   10^6$   &  $   K= 2 \times 10^6$   &   $K=5 \times 10^6 $ & $K=10^7$   \\
		
		\midrule 
        $E_1$    & 0.638723& 0.648979& 0.631947& 0.862372& 0.808676          \\    
        $E_2$   &  0.575948&0.505878&0.633176&0.552754&0.543251     \\

        $E_3$ &  0.487867&0.496663&0.514921&0.501386&0.498532  \\

        $E_4$ & 0.444677&0.462&0.462372&0.493716&0.460063  \\

        $E_5$ &  0.431359&0.436573&0.438725&0.445434&0.44006      \\
        $E_6$ &   0.404953&0.436986&0.430817&0.421285&0.426719   \\
       
		\bottomrule 
		
	\end{tabular}
    \caption{Numerical results for $-\frac{\ln |\error (g^{[500]},K)|}{\ln K}$}  
    \label{Numres}
\end{figure}

\newpage

\begin{figure}[hp] 
	
	\centering  
	
	\begin{tabular}{c|c|c|c|c|c} 
		
		\toprule 
		& $K=5 \times 10^5$  &  $   K=   10^6$   &  $   K= 2 \times 10^6$   &   $K=5 \times 10^6 $ & $K=10^7$   \\
		
		\midrule 
        $E_1$    &  0.631169&0.645835&0.630361&0.712336&0.706233       \\    
        $E_2$   &   0.567706&0.497695&0.617072&0.544564&0.534925  \\

        $E_3$ &   0.480746&0.489362&0.507584&0.494107&0.492241   \\

        $E_4$ &   0.437471&0.455814&0.455769&0.487224&0.453975  \\

        $E_5$ &    0.424289&0.429748&0.432216&0.439139&0.434175  \\
        $E_6$ &    0.39818&0.430663&0.424525&0.414925&0.420545          \\
       
		\bottomrule 
		
	\end{tabular}
    \caption{Numerical results for $-\frac{\ln |\error (g^{[1000]},K)|}{\ln K}$}  
    \label{Numres}
\end{figure}

\bigskip
\bigskip
\bigskip
\bigskip
\bigskip

\begin{figure}[hp] 

	\centering  
	
	\begin{tabular}{c|c|c|c|c|c} 
		
		\toprule 
		& $K=5 \times 10^5$  &  $   K=   10^6$   &  $   K= 2 \times 10^6$   &   $K=5 \times 10^6 $ & $K=10^7$   \\
		
		\midrule 
            $E_1$    &   0.627366& 0.638355&0.622921&0.675519&0.66696        \\    
        $E_2$   &    0.559385&0.490252&0.605272&0.53634&0.526713  \\

        $E_3$ &      0.473656&0.482631&0.500549&0.487117&0.485837  \\

        $E_4$ & 0.430963&0.449776&0.449301&0.480237&0.447863\\

        $E_5$ &      0.418162&0.423543&0.426154&0.4331&0.428402  \\
        $E_6$ &      0.391767&0.424534&0.418375&0.408966&0.414847        \\
       
		\bottomrule 
		
	\end{tabular}
    \caption{Numerical results for $-\frac{\ln |\error (g^{[2000]},K)|}{\ln K}$}  
    \label{Numres}
\end{figure}

\bigskip
\bigskip
\bigskip
\bigskip
\bigskip

\begin{figure}[hp] 
	
	\centering  
	
	\begin{tabular}{c|c|c|c|c|c} 
		
		\toprule 
		& $K=5 \times 10^5$  &  $   K=   10^6$   &  $   K= 2 \times 10^6$   &   $K=5 \times 10^6 $ & $K=10^7$   \\
		
		\midrule 
        $E_1$    &   0.606447& 0.634717&0.596622&0.769728&0.72325 \\    
        $E_2$   &    0.592215&0.483297&0.643853&0.573784&0.538485    \\

        $E_3$ &      0.469397&0.46504&0.491604&0.482104&0.506944   \\

        $E_4$ &    0.428098&0.451091&0.44635&0.467237&0.44823 \\

        $E_5$ & 0.417505&0.421578&0.424573&0.429395&0.423905\\
        $E_6$ &  0.398343&0.426324&0.412744&0.399427&0.40276  \\
       
		\bottomrule 
		
	\end{tabular}
    \caption{Numerical results for $-\frac{\ln |\error (h^{[500]},K)|}{\ln K}$}  
    \label{Numres}
\end{figure}

\newpage

\begin{figure}[hp] 
	
	\centering  
	
	\begin{tabular}{c|c|c|c|c|c} 
		
		\toprule 
		& $K=5 \times 10^5$  &  $   K=   10^6$   &  $   K= 2 \times 10^6$   &   $K=5 \times 10^6 $ & $K=10^7$   \\
		
		\midrule 
        $E_1$    &   0.599239&0.632049&0.594434&0.699701&0.677617        \\   
        $E_2$   &    0.58362&0.4755&0.618896&0.561171&0.528059       \\

        $E_3$ &       0.462718&0.458196&0.484627&0.474634&0.499336 \\

        $E_4$ &    0.421188&0.445555&0.440249&0.46107&0.442045\\

        $E_5$ &   0.410704&0.414996&0.418179&0.423116&0.41825\\
        $E_6$ &  0.391969&0.420306&0.406692&0.393337&0.396837       \\
       
		\bottomrule 
		
	\end{tabular}
    \caption{Numerical results for $-\frac{\ln |\error (h^{[1000]},K)|}{\ln K}$}  
    \label{Numres}
\end{figure}

\bigskip
\bigskip

\begin{figure}[hp] 
	
	\centering  
	
	\begin{tabular}{c|c|c|c|c|c} 
		
		\toprule 
		& $K=5 \times 10^5$  &  $   K=   10^6$   &  $   K= 2 \times 10^6$   &   $K=5 \times 10^6 $ & $K=10^7$   \\
		
		\midrule 
        $E_1$  &  0.596406     & 0.624722         &      0.588404              &            0.675777              &    0.659381     \\    
        $E_2$   &   0.578855         &    0.471502                 &     0.612237                      &  0.555763       &   0.523419 \\

        $E_3$ &   0.458443            &   0.454467                &       0.480825                   &   0.470964      &    0.495892            \\

        $E_4$ &     0.41748           &    0.442316              &       0.436737                    &  0.457416        &      0.438645       \\

        $E_5$ &       0.40721        &    0.411428                &         0.414829               &     0.419795        &    0.415218      \\
        $E_6$ &    0.388235          &   0.416779                  &     0.403099                &     0.390014       &  0.393701         \\
       
		\bottomrule 
		
	\end{tabular}
    \caption{Numerical results for $-\frac{\ln |\error (h^{[1500]},K)|}{\ln K}$}  
    \label{Numres}
\end{figure}

\bigskip
\bigskip

\begin{figure}[hp] 
	
	\centering  
	
	\begin{tabular}{c|c|c|c|c|c} 
		
		\toprule 
		& $K=5 \times 10^5$  &  $   K=   10^6$   &  $   K= 2 \times 10^6$   &   $K=5 \times 10^6 $ & $K=10^7$   \\
		
		\midrule 
             $E_1$    &     0.594196&0.620986&0.584018&0.670973&0.654911       \\    
        $E_2$   &    0.574501&0.468545&0.607293&0.55214&0.520405   \\

        $E_3$ &     0.456099&0.452267&0.478601&0.468548&0.493642\\

        $E_4$ &    0.415082&0.440149&0.434365&0.45504&0.436376\\

        $E_5$ &   0.404872&0.409085&0.412494&0.417499&0.413082\\
        $E_6$ &  0.385872&0.414499&0.40085&0.387842&0.391689     \\
       
		\bottomrule 
		
	\end{tabular}
    \caption{Numerical results for $-\frac{\ln |\error (h^{[2000]},K)|}{\ln K}$}  
    \label{Numres}
\end{figure}

\section{Numerical Evidences for Conjecture 1.3, part II}

Finally we test Conjecture 1.3 directly for the dimensions $s=2$ and $s=3$. We first recall the following general result of Niederreiter \cite{Ni1} in order to compute the star discrepancy $D_K^{*,[s]}$ of $\{X_k\}_{k=1}^K \subset [0,1]^s$ (with respect to the measure $\mu_{ST}^{[s]}$).

\bigskip

For $1 \leq i \leq s$, denote by $0= \beta^{(i)}_1 < \cdots  <\beta^{(i)}_{n_i}=1$ the set of distinct values of the set of $i$-th coordinates of the points $\{X_k\}_{k=1}^K$, with the values $0$ and $1$ being included. Denote by $\mathfrak{q}$ the collection of rectangular regions $Q \subset [0,1]^s$ of the form:
\[
Q =  \prod_{i=1}^s (  \beta^{(i)}_{j_i}  , \beta^{(i)}_{j_i +1}    ],  \,\  1 \leq j_i < n_i \mbox{ for } 1 \leq i \leq s
\]
which thus forming a partition of $(0,1]^s$. For $Q$ as above, denote:
\[
Y(Q) =   (\beta^{(1)}_{j_1 +1},\cdots, \beta^{(s)}_{j_s +1}  )  \in [0,1]^s
\]
the upper end point of $Q$, and
\[
Z(Q) = (\beta^{(1)}_{j_1},\cdots, \beta^{(s)}_{j_s}  ) \in [0,1]^s  
\]
the lower end point of $Q$.

\bigskip
We then have:
\begin{proposition} The star discrepancy $D_K^{*,[s]}$ is equal to:
\begin{eqnarray*}
\max_{Q \in \mathfrak{q}} \Big(\max \Big(    \Big|   \frac{A(  \mathcal{R}_{Y(Q)};K)  }{K}     - \mu_{ST}^{[s]}(     \mathcal{R}_{Y(Q)}  )        \Big|,       \Big|          \frac{A(\mathcal{R}_{Z(Q)} ;K)}{K}     - \mu_{ST}^{[s]}(     \mathcal{R}_{Z(Q)}  )    \Big|         \Big)  \Big) 
\end{eqnarray*}
\end{proposition}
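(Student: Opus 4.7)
The plan is to reduce the supremum over the continuous parameter $W \in [0,1]^s$ in the definition of $D^{*,[s]}_K$ to a finite maximum over the discrete grid of corner points $\{Y(Q), Z(Q) : Q \in \mathfrak{q}\}$, exploiting the fact that the count $W \mapsto A(\mathcal{R}_W; K)$ is a piecewise-constant step function while the measure $W \mapsto \mu_{ST}^{[s]}(\mathcal{R}_W)$ is continuous and monotonically non-decreasing in each coordinate.

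First I would verify that for any cell $Q = \prod_i (\beta^{(i)}_{j_i}, \beta^{(i)}_{j_i+1}]$ in $\mathfrak{q}$ and any $W \in Q$, the count $A(\mathcal{R}_W; K)$ equals the constant $A(\mathcal{R}_{Y(Q)}; K)$. Indeed, because the $\beta^{(i)}_j$'s enumerate the $i$-th coordinates appearing among $X_1, \ldots, X_K$, moving $W^{(i)}$ within $(\beta^{(i)}_{j_i}, \beta^{(i)}_{j_i+1}]$ does not change the truth value of $x_k^{(i)} < W^{(i)}$ for any $k$. Meanwhile the Sato-Tate product density $\prod_{i=1}^s 2 \sin^2(\pi u^{(i)})$ is continuous and non-negative, so $W \mapsto \mu_{ST}^{[s]}(\mathcal{R}_W)$ is continuous and monotonically non-decreasing in each coordinate; in particular on the closure $\bar{Q}$ it attains its minimum at $Z(Q)$ and its maximum at $Y(Q)$.

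Combining these two observations, on each cell $Q$ the absolute deviation $|A(\mathcal{R}_W; K)/K - \mu_{ST}^{[s]}(\mathcal{R}_W)|$ is the absolute value of a constant minus a continuous monotonic function, and so its sup over $\bar{Q}$ is attained at one of the two corners $Y(Q)$ or $Z(Q)$. Since the cells in $\mathfrak{q}$ partition $(0,1]^s$, and since the absolute deviation vanishes on the faces of $[0,1]^s$ where some coordinate of $W$ equals zero, the global star discrepancy equals $\max_{Q \in \mathfrak{q}} \sup_{W \in \bar{Q}} |A(\mathcal{R}_W; K)/K - \mu_{ST}^{[s]}(\mathcal{R}_W)|$, which in turn reduces to the finite maximum asserted in the proposition.

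The main technical obstacle lies in careful bookkeeping at the cell boundaries. The half-open convention in both the definition of $\mathcal{R}_W$ and the cells $Q$ means that $A(\mathcal{R}_{\cdot}; K)$ is left-continuous in each coordinate and jumps precisely across the grid hyperplanes; thus the value of $A(\mathcal{R}_P; K)$ at a grid point $P$ can differ from the constant value $A(\mathcal{R}_{Y(Q)}; K)$ taken inside a neighbouring cell having $P$ as a lower corner. Taking the maximum over both the upper corner $Y(Q)$ and the lower corner $Z(Q)$ as $Q$ ranges over $\mathfrak{q}$ is exactly what is needed for the discrete maximum to capture every such limiting contribution.
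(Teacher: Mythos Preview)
The paper does not give an independent argument; it simply cites Theorem~2 of Niederreiter~\cite{Ni1} and notes that the proof there carries over verbatim when Lebesgue measure is replaced by $\mu_{ST}^{[s]}$. Your self-contained sketch therefore takes a different route, and the overall strategy you describe --- piecewise-constant count, continuous coordinatewise-monotone measure, hence the supremum on each cell reduces to corner contributions --- is exactly the idea behind Niederreiter's proof.

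There is, however, a real gap at the last step. Your cell-by-cell analysis shows that on a cell $Q$ the supremum of $\bigl|A(\mathcal{R}_W;K)/K - \mu_{ST}^{[s]}(\mathcal{R}_W)\bigr|$ equals
\[
\max\Bigl(\,\Bigl|\tfrac{c_Q}{K} - \mu_{ST}^{[s]}(\mathcal{R}_{Z(Q)})\Bigr|,\ \Bigl|\tfrac{c_Q}{K} - \mu_{ST}^{[s]}(\mathcal{R}_{Y(Q)})\Bigr|\,\Bigr),
\]
where $c_Q = A(\mathcal{R}_{Y(Q)};K)$ is the constant value of the count on $Q$. The second term here is indeed $\bigl|A(\mathcal{R}_{Y(Q)};K)/K - \mu_{ST}^{[s]}(\mathcal{R}_{Y(Q)})\bigr|$, matching the proposition. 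But the first term pairs the count $c_Q = A(\mathcal{R}_{Y(Q)};K)$ from \emph{inside} the cell with the measure at $Z(Q)$; it is \emph{not} the quantity $\bigl|A(\mathcal{R}_{Z(Q)};K)/K - \mu_{ST}^{[s]}(\mathcal{R}_{Z(Q)})\bigr|$ written in the stated formula, since $A(\mathcal{R}_{Z(Q)};K)$ is in general strictly smaller than $c_Q$. You flag precisely this discrepancy in your final paragraph, but then assert without justification that taking the discrete maximum over the two ``diagonal'' deviations nonetheless captures every limiting contribution. It does not: already for $s=1$, $K=1$, with the single data point placed near $0$, every grid-point value of the deviation is small, whereas the true star discrepancy is close to $1$ (approached as $W$ decreases to the data point from above, where the count has jumped to $1$ while the measure is still tiny). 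What your own argument actually produces is the mixed comparison $A(\mathcal{R}_{Y(Q)};K)/K$ against $\mu_{ST}^{[s]}(\mathcal{R}_{Z(Q)})$, and it is that mixed term --- rather than the one printed --- which your reasoning shows equals the star discrepancy; the final identification with the displayed formula is not established.
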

\begin{proof}
This is Theorem 2 of \cite{Ni1}. In {\it loc. cit.} it is stated with respect to the Lebesgue measure on $[0,1]^s$, but the argument works verbatim with respect to the measure $\mu_{ST}^{[s]}$.
\end{proof}

\bigskip
\begin{remark}
\end{remark}
\noindent To compute $D_K^{*,[s]}$ using Proposition 4.1 ({\it i.e.} Theorem 2 of \cite{Ni1}) requires the evaluation of  $O(K^s)$ terms. Although there are algorithms that improve upon that of \cite{Ni1} for computing the star discrepancy ({\it c.f.} for example \cite{DE}), the time complexity of the known algorithms is still exponential in terms of the dimension $s$; this is an instance of the {\it Curse of Dimensionality}. In fact it is known that the computation of star discrepancy belongs to the class of NP-hard problems \cite{GSW}.

\bigskip
\bigskip

Below we use Proposition 4.1 to compute the numerical values of $D_K^{*,[s]}$ and hence test Conjecture 1.3 (with respect to $D_K^{*,[s]}$), in the cases $s=2$ and $s=3$. The results are tabulated in Figures 17, 18 below.

\bigskip
Summing up: the numerical results of section 3 and section 4 supply evidences for the validity of Conjecture 1.3. It is a refinement of Conjecture 1.1, and can be regarded as a qualitative form, of the Law of Iterated Logarithm for random numbers ({\it c.f.} Chapter 7 of \cite{Ni3}). In particular, Conjecture 1.3 implies that, with respect to the Sato-Tate measure, the sequence of Frobenius angles of an elliptic curve over $\mathbf{Q}$ without complex multiplication forms a pseudorandom sequence in $[0,1]$ with strong randomness property, (at least) as far as statistical distribution is concerned.

\bigskip
\bigskip

\begin{figure}[hp] 
	
	\centering  
	
	\begin{tabular}{c|c|c|c|c|c} 
		
		\toprule 
		& $K=5 \times 10^3$  &  $   K=   10^4$   &  $   K= 2 \times 10^4$   &   $K=5 \times 10^4 $ & $K=10^5$   \\
		
		\midrule 
        $E_1$    &   0.513743         &     0.481735               &     0.493825                     &    0.508233   &  0.506597\\    
        $E_2$   &   0.506241         &     0.511887                &       0.468917                    &   0.494157      & 0.492688\\

        $E_3$ &    0.484667           &     0.483204              &       0.484577                   &   0.526097      & 0.487613\\

        $E_4$ &   0.442152             &    0.418237              &        0.434046                   &   0.440515       &0.467903\\

        $E_5$ &    0.423393           &    0.443996                &    0.46427                    &  0.441762           & 0.42277\\
        $E_6$ &    0.413569          &    0.421813                 &      0.419844               &  0.458051          &0.426849\\
       
		\bottomrule 
		
	\end{tabular}
    \caption{Numerical results for $-\frac{\ln  D_K^{*,[2]} }{\ln K}$}  
    \label{Numres}
\end{figure}

\bigskip
\bigskip

\begin{figure}[hp] 
	
	\centering  
	
	\begin{tabular}{c|c} 
		
		\toprule 
		& $K=5 \times 10^3$   \\
		
		\midrule 
        $E_1$   &      0.495306           \\    
        $E_2$   &     0.479892             \\

        $E_3$ &  0.472156                   \\

        $E_4$ &     0.413948               \\

        $E_5$ &          0.405477           \\
        $E_6$ &      0.39623                \\
       
		\bottomrule 
		
	\end{tabular}
    \caption{Numerical results for $-\frac{\ln  D_K^{*,[3]} }{\ln K}$}  
    \label{Numres}
\end{figure}

\section{Conclusion and Final Remarks}

In this paper we propose conjectures that refine the Sato-Tate conjecture, specifically we conjecture that the Frobenius angles of a given elliptic curve over $\mathbf{Q}$ without complex multiplication, are statistically independently distributed with respect to the Sato-Tate measure, including the more quantitative version involving the discrepancy of joint distributions. Numerical evidences are presented to support the conjectures. 

\bigskip

Taylor {\it et. al.} \cite{CHT,T,HSBT,BLGHT} had established the Sato-Tate conjecture for elliptic curves over totally real fields without complex multiplication, and more generally \cite{BLGG} established the Sato-Tate conjecture for Hilbert modular forms over totally real fields ({\it c.f.} \cite{ACC+} for the latest results on the Sato-Tate conjecture for automorphic forms over number fields). Thus it is natural to expect that our Conjectures 1.1 and 1.3 extend to the more general setting as well. 

\bigskip
It would be intriguing to find possible connections between Conjecture 1.3 in the case $s \geq 2$ and properties of $L$-functions.

\bigskip

Finally and most interestingly, as observed experimentally from the numerics, the rate of convergence to the measure $\mu_{ST}^{[s]}$, is slower in the case of curves with higher Mordell-Weil ranks (in the one dimensional case $s=1$ this was already observed in \cite{St}). Heuristically, in accordance with the original form of the Birch and Swinnerton-Dyer conjecture, this can be seen as due to the fact that, for curves of high Mordell-Weil rank, there is a Chebyshev bias for the quantities $a_{p_k}$ towards being negative, {\it c.f.} \cite{M}, \cite{S}, \cite{KM}. It would be important to understand the rate of convergence to the measure $\mu_{ST}^{[s]}$ in a more precise form (for example along the lines suggested in \cite{St} in the case $s=1$).

\end{document}